\numberwithin{equation}{section}
\numberwithin{equation}{section}
\theoremstyle{plain}
\newtheorem{theorem}{Theorem}[section]
\newtheorem{lemma}[theorem]{Lemma}
\newtheorem{proposition}[theorem]{Proposition}
\theoremstyle{definition}
\newtheorem{definition}[theorem]{Definition}
\newtheorem{remark}[theorem]{Remark}
\theoremstyle{remark}
\numberwithin{equation}{section}
\newcommand{\rr}{\mathbb{R}}
\newcommand{\Z}{\mathbb{Z}}
\newcommand{\eps}{\varepsilon}
\newcommand{\norm}[1]{\left\lVert #1 \right\rVert}
\newcommand{\1}{\mathbf{1}}
\title{off-diagonal estimates for cube skeletons maximal operators}
\author{Andrea Olivo}
\address{Departamento de Matem\'atica,
Facultad de Ciencias Exactas y Naturales,
Universidad de Buenos Aires, Ciudad Universitaria
Pabell\'on I, Buenos Aires 1428 Capital Federal Argentina} \email{aolivo@dm.uba.ar}
\thanks{The author is partially supported by grant PIP (CONICET) 11220110101018.}
\begin{document}

\begin{abstract}
We provide off-diagonal estimates for a maximal operator arising from a geometric problem of estimating the size of certain geometric configuration of $k$-skeletons in $\mathbb{R}^n$.
 This is achieved by interpolating a weak-type endpoint estimate with the known diagonal bounds. The endpoint estimate is proved by combining a geometric result about $k$-skeletons and adapting an argument used to prove off-diagonal estimates for the circular maximal function in the plane.\\
 
 2010 \textit{Mathematics Subject Classification}. Primary: 42B25. Secondary: 43A85.

\end{abstract}


\keywords{averages over skeletons, maximal functions, off-diagonal estimates}

\maketitle

\section{Introduction}

In this work we present off-diagonal estimates for maximal operators associated to averaging over (neighborhoods
of) squares in the plane and, more generally over $k$-skeletons of cubes with arbitrary
dimension. Roughly speaking,
 the $k$-dimensional
boundary of an $n$-dimensional cube with axes-parallel sides in $\mathbb{R}^n$, for $0 \leq k < n$.

The interest in this type of operators emerges, on one hand, from a geometric problem
about the size of a set containing re-scaled and translated copies of the $k$-skeleton of the
unit cube in $\mathbb{R}^n$ around every point of a set of given size. On the other hand, it is a natural
variant of the celebrated spherical maximal operator of Bourgain-Stein.

The problem of finding
 minimal values of the size for sets in $\mathbb{R}^n$ containing $k$-skeletons centered at any point of a given set
was introduced by Keleti, Nagy and Shmerkin \cite{KNS18} for the case $n=2$ and  by Thornton
 \cite{Tho17}, for the case $n\geq 3$. 
Unlike the situation of spheres, where a set containing a sphere with center in every point of a set of positive Lebesgue measure must have positive Lebesgue measure, a set containing the $(n-1)$-skeleton of an $n$-dimensional cube with axes-parallel sides and center in every point of $\mathbb{R}^n$ can have zero Lebesgue measure. Even more, it can have Hausdorff dimension $n-1$ (see \cite{Tho17}), the same as a single $(n-1)$-skeleton. This result indicates that Hausdorff dimension does not fit well with this problem. For this reason,  
 in \cite{KNS18, Tho17}, the authors obtained estimates for other notions of fractal dimensions as for example box dimension and packing dimension. The arguments from \cite{KNS18, Tho17} are direct and do not involve any maximal operators. Regardless, in \cite{KNS18} the authors introduced the maximal operator associated with the aforementioned geometric problem.
More precisely, for each $f \in L^1_{\rm loc}(\rr^n)$, $0<\delta <1$ and $0\leq k < n \in \mathbb{N}$, the $k$-\textit{skeleton maximal operator} with width $\delta$ is defined as 
\begin{equation}\label{k-skeleton maximal}
M^{k}_{\delta}f(x) = \sup\limits_{1 \leq r \leq2} \min\limits_{j=1}^{N}  \fint _ {S_{k,\delta}^{j}(x,r)} |f(y)| \,dy,
\end{equation}
where  $S_{k,\delta}(x,r)$ is a $\delta$-neighborhood of the $k$-skeleton of a cube with center $x$ and side length $2r$, and the index $j$ enumerates its $N=2^{n-k}{n \choose k}$ faces.
Here, as usual, the symbol $\fint_E f d\mu:=\frac{1}{\mu(E)}\int_E fd\mu$ denotes the average of the function $f$ with respect to the measure $\mu$ over the set $E$.

As was pointed out in \cite{KNS18}, it turns out necessary take the minimum over all the faces of the $k$-skeleton to avoid natural and trivial results, analog to the ones for Hausdorff dimension.
 On the other hand,
 this operator cannot be bounded from $L^p$ to $L^q$ for any  finite $p,q$, for otherwise a set with a $k$-skeleton centered at
every point of $\mathbb{R}^n$ would have positive measure. Following this line, Shmerkin and the present author studied in  \cite{OS18} discretized versions of $M^k_\delta$, and proved nearly sharp $L^p$ bounds for the $k$-skeleton maximal operator. Easily one can deduce that $M^k_ \delta$ is bounded on $L^p$, if $p>1$, just by comparison with the Hardy-Littlewood maximal operator.  
Nonetheless, an interesting problem is to determine the rate at which the norm of $M^k_\delta$ increases as $\delta$ goes to zero.

\begin{theorem}\cite[Theorem 1.2]{OS18}
Given $0\le k< n$, $1\le p <\infty$ and $\eps>0$, there exist  positive constants $C'(n,k,\eps), C(n,k)$ such that
\begin{equation*}
C'(n,k,\eps)\cdot \delta^{\frac{k-n}{2np}+\eps} \leq \norm{M^{k}_{\delta}}_{L^p \to L^p} \leq C(n,k)\cdot \delta^{\frac{k-n}{2np}},
\end{equation*}
for all $\delta\in (0,1)$.
\end{theorem}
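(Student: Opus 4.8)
The two inequalities are the two analytic faces of a single sharp geometric fact about $k$-skeletons, and I would prove them in parallel. For the upper bound, first record the trivial $\norm{M^k_\delta}_{L^\infty\to L^\infty}\le 1$ together with the pointwise inequality $M^k_\delta f\le\big(M^k_\delta(|f|^p)\big)^{1/p}$, valid for $p\ge1$: it follows by applying Jensen's inequality (concavity of $t\mapsto t^{1/p}$) inside each face-average and then using monotonicity of $t\mapsto t^{1/p}$ to move it past the $\min_j$ and the $\sup_r$. Taking $L^p$ norms, any estimate $\norm{M^k_\delta}_{L^1\to L^1}\le A$ upgrades to $\norm{M^k_\delta}_{L^p\to L^p}\le A^{1/p}$ for every $p\ge1$, so the whole upper bound reduces to the single endpoint
\[
\int_{\rr^n}M^k_\delta f\,dx\ \lesssim_{n,k}\ \delta^{\frac{k-n}{2n}}\,\norm{f}_{L^1(\rr^n)}.
\]
After linearising the supremum by a measurable choice $r(x)\in[1,2]$, bounding the minimum over the $N=2^{n-k}\binom nk$ faces by their geometric mean, and applying a standard level-set decomposition, this follows from the geometric statement: \emph{if for each $x$ in a set $X$ there is $r(x)\in[1,2]$ with $|E\cap S^j_{k,\delta}(x,r(x))|\gtrsim|S^j_{k,\delta}(x,r(x))|$ for every face $j$, then $|X|\lesssim_{n,k}\delta^{\frac{n-k}{2n}}|E|$} — the sharp $\delta$-quantitative form of the box-dimension lower bounds of Keleti--Nagy--Shmerkin and Thornton for sets carrying a $k$-skeleton around each of their points.

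The step I expect to be the main obstacle is the saving $\delta^{(n-k)/(2n)}$ in that geometric estimate. It reflects the rigidity of a cube corner — the frozen coordinates of the faces of any one orientation class are precisely the numbers $x_i\pm r(x)$, so a few faces already determine the pair $(x,r)$ — combined with the fact that \emph{all} $N$ faces share the single radius $r$. Even the one-dimensional model $M^0_\delta f(x)=\sup_{r}\min\{\fint_{B(x-r,\delta)}|f|,\fint_{B(x+r,\delta)}|f|\}$ is nontrivial: a direct Cauchy--Schwarz split of the two factors loses a whole power of $\delta$ (giving $\delta^{-1}$), and extracting the correct $\delta^{-1/2}$ requires the transversality observation that the map $x\mapsto(x-r(x),\,x+r(x))$ cannot concentrate in both coordinates simultaneously. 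Carrying this out in $\rr^n$ is the real work: I would run a C\'ordoba-type $L^2$ incidence count on a $\delta$-discretised linearisation of $M^k_\delta$, of Loomis--Whitney flavour, in which the common radius $r$ couples the $n$ coordinate directions so that the single available square-root gain is shared among them, producing exactly the exponent $\delta^{(n-k)/(2n)}$ rather than the weaker bound that a direction-by-direction argument yields.

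For the lower bound I would exhibit the extremiser implicit above. Fix $\eps'>0$ small and, by a random construction (or a self-similar Cantor-type set), produce a set $A\subset[-4,4]$ that is a union of $\sim\delta^{\frac1{2n}-1-\eps'}$ arcs of length $\sim\delta$ — so $|A|\approx\delta^{\frac1{2n}-\eps'}$ — with the property that
\[
X:=\big\{x\in[-1,1]^n:\ \exists\,r\in[1,2]\ \text{with}\ x_i-r\in A\ \text{and}\ x_i+r\in A\ \text{for all }i\big\}
\]
has $|X|\gtrsim1$. That such $A$ exists is the only delicate point here: for one coordinate the set $\{r:x_i\pm r\in A\}$ has measure $\approx|A|^2$ for typical $x_i$, so for a fixed $x$ the (essentially independent) conditions over the $n$ coordinates leave a set of admissible $r$ of measure $\approx|A|^{2n}=\delta^{1-2n\eps'}\gg\delta$, and a second-moment / Paley--Zygmund argument then promotes this to a positive-measure set of good $x$. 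Setting $E:=\big\{y\in[-4,4]^n:\#\{i:y_i\in A\}\ge n-k\big\}$ gives $|E|\lesssim_{n,k}|A|^{\,n-k}\approx\delta^{(n-k)(\frac1{2n}-\eps')}$, while for every $x\in X$ each $k$-face of $S_{k,\delta}(x,r(x))$ is contained in $E$, since its $n-k$ frozen coordinates lie in $A$ with room to spare for the $\delta$-neighbourhood. Hence $M^k_\delta\1_E\ge1$ on $X$, and
\[
\norm{M^k_\delta}_{L^p\to L^p}\ \ge\ \frac{\norm{M^k_\delta\1_E}_{L^p}}{\norm{\1_E}_{L^p}}\ \ge\ \Big(\frac{|X|}{|E|}\Big)^{1/p}\ \gtrsim\ \delta^{-\frac{n-k}{p}\left(\frac1{2n}-\eps'\right)}\ =\ \delta^{\frac{k-n}{2np}+\eps},
\]
with $\eps=\frac{(n-k)\eps'}{p}$, which is as small as desired once $\eps'$ is small; this is the claimed bound.
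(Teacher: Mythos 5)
Your overall architecture is sound and the numerology is right: the Jensen reduction $M^k_\delta f\le\big(M^k_\delta(|f|^p)\big)^{1/p}$ correctly collapses all $p\ge 1$ to the $L^1$ endpoint, and your lower-bound scheme (a sparse set $A$ of frozen coordinates, $E=\{y:\#\{i:y_i\in A\}\ge n-k\}$, $|E|\lesssim|A|^{n-k}$) is exactly the Keleti--Nagy--Shmerkin/Thornton construction the paper invokes, with the correct exponent $|A|\approx\delta^{1/(2n)-\eps'}$. But the proposal defers precisely the step that constitutes the theorem. The geometric estimate you isolate is stated with the wrong sign --- as written, $|X|\lesssim\delta^{(n-k)/(2n)}|E|$ fails already for $X=[0,1]^n$ and $E$ the union of all the skeleton neighbourhoods (then $|X|\approx|E|\approx 1$); the correct form is $|X|\lesssim\delta^{-(n-k)/(2n)}|E|$ --- and, more importantly, it is not proved: the ``C\'ordoba-type $L^2$ incidence count of Loomis--Whitney flavour'' is a strategy, not an argument. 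The paper's actual mechanism is different and is the one concrete idea you are missing: Lemma \ref{chooseelement} selects \emph{one} $k$-face from each skeleton in such a way that any affine coordinate $k$-plane contains at most $C_{n,k}m^{1-(n-k)(2n-1)/(2n^2)}$ of the chosen faces; the exponent $\delta^{(k-n)/(2n)}$ then falls out of Fubini (or the multiplicity function $\Upsilon$) applied to the resulting \emph{linear} operator $\widetilde M^k_{\rho,\delta}$.

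A second genuine gap is your ``standard level-set decomposition'' from the restricted (characteristic-function) statement to the full $L^1\to L^1$ bound. Because of the $\min$ over faces, $M^k_\delta$ is \emph{not} sublinear ($\min_j(a_j+b_j)$ is bounded below, not above, by $\min_j a_j+\min_j b_j$), so you cannot decompose $f$ into dyadic level sets and sum the restricted estimates; linearising only the supremum in $r$ does not fix this, since the $\min_j$ remains. The paper's linearisation removes both nonlinearities at once: the function $\Phi_\rho$ fixes a face per skeleton (and it is exactly the face-selection lemma that guarantees this can be done with small overlap), after which $\widetilde M^k_{\rho,\delta}$ is a positive linear averaging operator and Lemma \ref{sup} transfers its bounds back to $M^k_\delta$. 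Finally, the existence of your set $A$ with $|X|\gtrsim 1$ is asserted only heuristically (the ``essentially independent'' coordinates and the second-moment step are the whole difficulty); this is the content of Thornton's construction and would need to be carried out. In short: correct skeleton, but the two load-bearing ingredients --- the face-selection/overlap lemma and the extremal construction --- are assumed rather than supplied.
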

The lower bound relies on a
specific construction due in \cite{Tho17} and to obtain the upper bound there is in fact a result for a bigger
(normwise) maximal type operator which is localized on a given cube and it is linear
(see Section \ref{sec:preliminaries and notation} for more details).
From this $L^p$ bound it is possible to recover the known values for the Box counting
dimension of sets containing skeletons centered at any point of a prescribed set of
centers under the condition of having full Box dimension (see \cite[Corollary 3.5]{OS18}).


Motivated by the previous result, the purpose of this article is to study the off-diagonal case. Since the $k$-skeleton maximal operator is bounded on $L^p$, $p\geq 1$, and  is trivially bounded on $L^\infty$ and from $L^1 \to L^\infty$, by the classical Marcinkiewicz interpolation theorem, applied to the bigger (normwise) linear maximal operator (see Definition \ref{linearmaximal}), we can obtain the boundedness from $L^p \to L^q$, $1<p\leq q$. Nevertheless,
   as we mentioned before, we are interested at the rate at which the norm of the $k$-skeleton maximal operator from $L^P$ to $L^q$ increases as the parameter $\delta$ tends to 0, where as usual
 \[
 \norm{M^k_\delta}_{L^p \rightarrow L^q} = \sup_{f \neq 0} \frac{\norm{M^k_\delta f}_{L^q}}{\norm{f}_{L^p}}.
 \] 

In the next, we say that  $f(\delta) \approx g(\delta)$ if there exist two positive constants $c, c'$, not depending on $\delta$, such that 
$
c' g(\delta) \leq f (\delta) \leq c g(\delta)$. 

Our main result is the following:

\begin{theorem} \label{mainresult} Given $1< p \leq q < \infty$  and $\varepsilon>0$, there exist positive constants $C=C(p,q,k,n)$ and $C'=C'(p,q, k , n, \varepsilon)$  such that,
\begin{equation*}\label{cota-regionI}
\begin{split}
 C'\delta^{\frac{k-n}{2np} + \varepsilon} \leq \norm{M^k_\delta}_{L^p\rightarrow L^q} &\leq C \delta^\frac{k-n}{2np} \hspace{1.2cm}   \textup{if}\,\, q\leq q^*p \\
\norm{M^k_\delta}_{L^p\rightarrow L^q} &\approx \delta^{\frac{n}{q}-\frac{n-k}{p}}   \hspace{1cm}  \textup{if} \,\, q > q^*p  %
\end{split}
\end{equation*}

for all $\delta \in (0,1)$ and $q^*=\frac{2n^2}{(n-k)(2n-1)}$.
\end{theorem}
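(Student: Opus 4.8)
The plan is to prove the theorem by interpolating two endpoint estimates against the known diagonal bound. The diagonal bound is already supplied by \cite[Theorem 1.2]{OS18}: $\norm{M^k_\delta}_{L^p\to L^p}\approx \delta^{(k-n)/(2np)}$ up to an $\eps$-loss in the lower bound. The missing ingredient is a \emph{weak-type endpoint estimate} of the form $\norm{M^k_\delta}_{L^p\to L^{q,\infty}}\lesssim \delta^{n/q-(n-k)/p}$ at the critical exponent pair $(p,q)$ sitting on the line $q=q^*p$ — that is, at $q=\frac{2n^2}{(n-k)(2n-1)}\,p$. Once this is in hand, real interpolation (Marcinkiewicz) between the strong diagonal bound on $L^p\to L^p$ and the weak endpoint on the critical line will produce the strong $L^p\to L^q$ bound $\lesssim \delta^{(k-n)/(2np)}$ in the subcritical range $p<q\le q^*p$, since the exponent $\frac{k-n}{2np}$ is what the geometry of the interpolation triangle delivers when the critical weak bound lies on that line. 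For the supercritical range $q>q^*p$, the estimate is genuinely different: there the trivial/geometric scaling dominates and one gets $\norm{M^k_\delta}_{L^p\to L^q}\approx \delta^{n/q-(n-k)/p}$. The upper bound in this regime follows by interpolating the critical weak endpoint with the trivial $L^1\to L^\infty$ bound (which scales like the measure of a single $\delta$-neighborhood of a face, namely $\delta^{n-k}$); the matching lower bound comes from testing $M^k_\delta$ against the indicator of a single face-neighborhood, or a rescaled bump, where the $\min$ over faces is harmless because a suitable translate is concentrated near the chosen point.

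The heart of the matter, and the step I expect to be the main obstacle, is proving the critical weak endpoint estimate. The strategy is the one flagged in the abstract: combine a geometric fact about $k$-skeletons with an adaptation of the circular maximal function off-diagonal argument (à la Schlag, or Bourgain's original geometric approach in the plane). Concretely, one fixes a level $\lambda>0$ and must estimate $|\{x: M^k_\delta f(x)>\lambda\}|$ for $f$ an $L^p$-normalized function, which by the usual reductions one can take to be a characteristic function $\1_E$. The key point — and this is where the \emph{minimum} over the $N=2^{n-k}\binom{n}{k}$ faces is exploited rather than being an obstruction — is that if $M^k_\delta\1_E(x)>\lambda$ then \emph{every} face of some skeleton $S_{k,\delta}(x,r(x))$ has density of $E$ at least $\lambda$. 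Having many faces each capturing mass, together with the almost-disjointness/transversality of the faces of skeletons with different centers and radii, gives a favorable incidence bound. One then runs the standard bootstrapping: bound the measure of the superlevel set by a geometric count of how many $\delta$-tubes (or $\delta$-slabs, the face-neighborhoods) can be packed subject to the density constraint, optimize in $\delta$ and $\lambda$, and read off the exponent $n/q-(n-k)/p$ at the critical $(p,q)$.

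For the geometric input I would invoke (or re-derive in the $\delta$-discretized form) the $k$-skeleton packing/covering estimate underlying \cite{OS18} — essentially, a bound on the number of $\delta$-separated translates/dilates of the unit $k$-skeleton that can be $\delta$-covered by a set of given size — which is precisely the localized linear maximal operator bound referenced after \cite[Theorem 1.2]{OS18}. The adaptation of the circular-maximal argument amounts to replacing circles (one-parameter family of curves with curvature/transversality) by the faces of skeletons: each face is a piece of an axis-parallel $k$-plane, so there is no curvature, but transversality across the $n-k$ "missing" coordinate directions plays the analogous role, and the $\min$ forces all $n$ coordinate-hyperplane directions to be active simultaneously, which is what recovers an effective "curvature-like" gain of $\delta^{1/(2n)}$-type in the exponent. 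After the weak endpoint is established, the two interpolations are routine: I would state the interpolation triangle explicitly with vertices corresponding to $(1/p,1/p)$ on the diagonal, the critical point on $q=q^*p$, and $(0,0)$, check that $p<q\le q^*p$ lands in the first sub-triangle and $q>q^*p$ in the second, and compute the resulting exponents, matching the claimed $\frac{k-n}{2np}$ and $\frac{n}{q}-\frac{n-k}{p}$ respectively; the $\eps$-loss in the first lower bound is simply inherited from the $\eps$-loss in the diagonal lower bound of \cite[Theorem 1.2]{OS18}.
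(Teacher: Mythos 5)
Your high-level architecture (a weak-type endpoint plus interpolation against known bounds) is the paper's strategy, but two of your key steps have genuine gaps. First, you propose to apply Marcinkiewicz interpolation directly to $M^k_\delta$. This operator is \emph{not} sublinear (the $\min$ over faces destroys subadditivity), so the interpolation theorems you invoke do not apply to it. The paper's way around this is essential, not cosmetic: one passes to the linearized operator $\widetilde{M}^k_{\rho,\delta}$ (Definition \ref{linearmaximal}), which fixes in advance, for each center, a radius $\rho(x^*)$ \emph{and a single chosen face}, proves all weak-type and interpolated bounds for $\widetilde{M}^k_{\rho,\delta}$ uniformly in $\rho$, and only then transfers back via Lemma \ref{sup}. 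Your proposal never introduces a linear object to interpolate, so the "routine" interpolation steps are not actually licensed.

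Second, your endpoint sits in the wrong place. You want a restricted weak-type bound at $(p,q^*p)$ for general $p>1$ and plan to get it from Schlag's bootstrapping together with the face-selection/packing lemma (Lemma \ref{chooseelement}). But in Schlag's combinatorial scheme (Lemma \ref{combinatorialmethod}) the domain exponent is $p=\alpha+1$, where $\alpha$ is the power of $\lambda^{-1}$ in the multiplicity bound $\mu\leq H\lambda^{-\alpha}m^{\beta}$. The only incidence input available here, Lemma \ref{chooseelement}, gives $\mu\leq C_{n,k}m^{1-\frac{(n-k)(2n-1)}{2n^2}}$ with \emph{no} $\lambda$-dependence, i.e.\ $\alpha=0$, which forces $p=1$ and $q=q^*$. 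To run your argument at $p>1$ you would need a density-improved multiplicity bound gaining $\lambda^{-(p-1)}$, and nothing in your sketch (or in the geometry of flat, axis-parallel faces) produces such a gain. The statement you want at $(p,q^*p)$ is true, but the way to reach it is exactly the paper's route: prove the restricted weak $(1,q)$ bounds for $\widetilde{M}^k_{\rho,\delta}$ (for $q\le q^*$ and, by artificially weakening $\beta$, for $q>q^*$ as well), upgrade restricted weak to weak type, and interpolate with the trivial $L^\infty\to L^\infty$ bound; the two regimes $q\le q^*p$ and $q>q^*p$ then come out of which weak $(1,q/p)$ estimate is in force. A further small error: for the lower bound when $q>q^*p$, the indicator of a single face-neighborhood gives $M^k_\delta f\equiv 0$ essentially everywhere, again because of the $\min$; one must test against the $\delta$-neighborhood of an entire $k$-skeleton (which happens to have comparable measure $\approx\delta^{n-k}$, so the exponent count is unchanged).
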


The lower bound follows by applying $M^k_\delta$ to an appropriate function. 
For the upper bounds, we apply a combinatorial method used by Schlag in \cite{Sch96}  for the circular maximal operator in the plane, combined with classical interpolation theorems and an estimate from the combinatorial Lemma \ref{chooseelement}, that we will present in the next section.

For the remaining case, $q>p$, it is straightforward conclude that the $k$-skeleton maximal operator is unbounded. 
 In fact, given $N \in \mathbb{N}$, let $f$ be the characteristic function of an $n$-dimensional cube with side length $N$ and consider another cube, namely $N^*$, with the same center and side length $N-6$. By a simple calculation we obtain $M^k_\delta f(x) \geq 1$, for all $x \in N^*$. Therefore,
\[
\norm{M^k_\delta}_{L^p \rightarrow L^q} \geq (N-6)^{n/q}N^{-n/p},
\]
which grows with $N.$






\section{Preliminaries and notation}\label{sec:preliminaries and notation}
 
Most of the following definitions and results were introduced in \cite{OS18}, for completeness here we give a brief summary. In particular, we  define the linear maximal operator that it turn out be the key to obtain upper bound estimates.

We denote the half-open unit cube by $Q_0$, i.e. $Q_0=[0,1)^n$. Let $0<\delta<1$ such that $1/\delta$ is an integer and consider the grid $Q_0 \cap \delta\Z^n$. We define $Q_0^* := \{x_1,\ldots,x_u\}$, $u = \delta^{-n}$, the centers of the half-open $n$-cubes, $Q_0(1), \ldots, Q_0(u)$, with side length $\delta$ determined by the grid $Q_0 \cap \delta\Z^n$. 

We define the function $\psi: Q_0 \rightarrow Q_0^*$, $x \rightarrow x^*$, where $x^*$ denotes the center of the corresponding half-open $n$-cube with center in $Q_0^*$ and side length $\delta$ containing $x$. 
Observe that $\psi$ is constant over each $Q_0(i)$ and the sets $\psi^{-1}(Q_0(i))$,  $i=1, \ldots, u$, form a Borel partition of $Q_0$.

 We use the letter C to denote positive constants, indicating any parameters they may
depend on by subindices. Their values may change from line to line. For example,
$C_{n}$ denotes a positive function of $n$. 

\medskip

The next combinatorial lemma is crucial in our work. It states,  roughly speaking, that given a finite family of $k$-skeletons, we can extract one face from each skeleton and the overlap among them is controlled.

\begin{lemma} \cite[Lemma 3.2]{OS18} \label{chooseelement}
There is a constant $C_{n,k}<\infty$, depending only on $n,k$, such that the following holds. Let $\{S_k(x_i,r_i)\}_{i=1}^m$ be a finite collection of $k$-skeletons in $\rr^n$. Then it is possible to choose one $k$-face of each skeleton with the following property: If $V$ is an affine $k$-plane which is a translate of a coordinate $k$-plane, then $V$ contains at most
\[
C_{n,k}\displaystyle m^{1-\frac{(n-k)(2n-1)}{2n^2}}
\]
of the chosen $k$-faces.
\end{lemma}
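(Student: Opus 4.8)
The plan is to prove this by a greedy/probabilistic selection argument, processing the skeletons one at a time. Recall that a single $k$-skeleton $S_k(x_i,r_i)$ has $N = 2^{n-k}\binom{n}{k}$ faces, each of which is a $k$-dimensional cube lying in some affine $k$-plane parallel to one of the $\binom{n}{k}$ coordinate $k$-planes. The key observation is that a translate $V$ of a fixed coordinate $k$-plane can contain at most one face from a given skeleton, and moreover only those skeletons whose center lies in a suitable $(n-k)$-dimensional ``slab'' aligned with $V$ can contribute a face to $V$ at all. So the combinatorics reduces to a question about how the centers $x_i$ distribute among such slabs.

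First I would set up the bookkeeping: fix one of the $\binom{n}{k}$ coordinate directions for the $k$-planes, say the one spanned by coordinates in a set $A \subset \{1,\dots,n\}$ with $|A| = k$; the complementary $(n-k)$ coordinates parametrize which translate $V$ we look at. For a skeleton to have a face in a translate $V$ parallel to $A$, the ``$A^c$-part'' of its center, together with its radius $r_i$, must place one of its $2^{n-k}$ parallel faces exactly in $V$. I would then note that for each choice of direction $A$, each skeleton offers exactly $2^{n-k}$ candidate faces in that direction, sitting in $2^{n-k}$ distinct parallel translates determined by $x_i$ and $r_i$. The selection task is: for each skeleton choose a direction $A_i$ and then one of the $2^{n-k}$ faces in that direction, so that no translate $V$ accumulates more than $C_{n,k} m^{1-(n-k)(2n-1)/(2n^2)}$ chosen faces. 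I expect the cleanest route is a random choice: pick $A_i$ uniformly at random among the $\binom{n}{k}$ directions and then the face uniformly among the $2^{n-k}$ options, independently over $i$; then for a fixed $V$, the number of chosen faces in $V$ is a sum of independent indicators with small mean, and I would like to control the maximum over all relevant $V$ by a union bound plus a Chernoff/moment estimate.

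The subtlety — and I expect this to be the main obstacle — is that the number of relevant translates $V$ is not a priori bounded: the centers $x_i$ and radii $r_i$ are arbitrary reals, so in principle there are up to $\sim m$ distinct slabs in each direction, and a naive union bound over all of them against a Chernoff tail of the form $\exp(-c t)$ only buys a threshold of order $\log m$, not the polynomial power $m^{1-(n-k)(2n-1)/(2n^2)}$ that is being claimed — which is actually a much \emph{weaker} (larger) bound. So I would reverse the emphasis: rather than proving concentration, I would argue that the deterministic worst case is already governed by an incidence-counting inequality. The exponent $1 - \frac{(n-k)(2n-1)}{2n^2}$ should emerge from balancing two extremes: either many skeletons share a common slab in some direction (in which case spreading their chosen faces across the $2^{n-k}$ parallel translates within that slab, and across the $\binom{n}{k}$ directions, dilutes the overlap), or the centers are spread out (in which case few skeletons meet any given $V$). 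Concretely, I would bound, for each direction $A$ and each skeleton $i$, the ``load'' it can place on the worst translate, then optimize the allocation by a pigeonhole argument over the $2^{n-k}\binom{n}{k}$ face-slots. The precise arithmetic leading to $(n-k)(2n-1)/(2n^2)$ is the part I would have to grind through carefully, matching it against a lower-bound construction (the extremal configuration of Thornton referenced in the paper) to be sure the exponent is not improvable by this method.

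Finally, I would assemble the pieces: having shown that some choice of one face per skeleton achieves overlap at most $C_{n,k}\, m^{1 - (n-k)(2n-1)/(2n^2)}$ on \emph{every} translate $V$ of \emph{every} coordinate $k$-plane, the constant $C_{n,k}$ absorbs the factors $2^{n-k}$ and $\binom{n}{k}$ from summing over faces and directions. If the random argument does give the bound with the right power (which it will if the effective number of relevant $V$'s in the union bound can itself be bounded polynomially in $m$, using that each of the $m$ skeletons contributes only $2^{n-k}\binom{n}{k}$ translates, so at most $2^{n-k}\binom{n}{k} m$ of them are ever relevant), then a union bound over these $\lesssim_{n,k} m$ translates against a tail bound of the form $\exp(-c\, t^2 / (\mu + t))$ with $\mu$ the mean load does the job with $t$ polynomial in $m$, and a first-moment argument then guarantees the existence of a good deterministic selection. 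I would present the probabilistic version as the main line of proof, since it is shortest, and keep the incidence heuristic as the motivation for why the exponent takes the stated form.
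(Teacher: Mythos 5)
Your setup (faces indexed by a direction set $A$ and a sign vector, each face lying in the translate of $\pi_A$ determined by the coordinates $x_{i,j}+\epsilon_j r_i$, $j\in A^c$) is correct, but the argument you designate as your main line of proof --- choose the direction and then the face uniformly at random, and control the load on each plane by a Bernstein-type tail plus a union bound over the $\lesssim_{n,k} m$ relevant planes --- does not work, and the obstruction is not the one you identify. The union bound is indeed over only $O_{n,k}(m)$ planes, but the fatal problem is the \emph{mean}: for the uniform random choice, the expected number of chosen faces on a fixed plane $V$ equals $N^{-1}$ times the number of skeletons having \emph{some} face in $V$, and nothing prevents that count from being $m$. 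Concretely, for $n=2$, $k=1$, take $m$ distinct squares with $x_{i,1}-r_i=0$ for every $i$ (all left sides on the line $x=0$, centers $(r_i,b_i)$ distinct). Then the number of randomly chosen sides lying on $\{x=0\}$ is Binomial$(m,1/4)$, hence concentrated near $m/4\gg m^{5/8}=m^{1-\frac{(n-k)(2n-1)}{2n^2}}$. No concentration inequality repairs this; the selection must be adaptive to the configuration. Your fallback paragraph correctly senses that a deterministic incidence/pigeonhole argument is needed, but it is left entirely schematic: ``the precise arithmetic leading to $(n-k)(2n-1)/(2n^2)$ is the part I would have to grind through'' is precisely the content of the lemma, so as written there is no proof.

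What is missing in substance is the rigidity input that the cited proof in \cite{OS18} exploits and that your proposal never uses: the two opposite faces of $S_k(x_i,r_i)$ orthogonal to $e_j$ lie in the hyperplanes $\{y_j=x_{i,j}+r_i\}$ and $\{y_j=x_{i,j}-r_i\}$, and this \emph{pair} of values determines both $x_{i,j}$ and $r_i$. This is what bounds how many distinct skeletons can be simultaneously forced onto few planes (already in the baby case $n=1$, $k=0$ it is exactly the statement that a simple graph with $m$ edges can be oriented with in-degrees $O(\sqrt m)$), and it is the source of the exponent. Without some such algebraic constraint the statement is false for an abstract assignment of $N$ candidate planes per skeleton, as your own ``all skeletons share a slab'' scenario shows. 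To fix the proposal you would need to replace the uniform random choice by the deterministic greedy/counting selection of \cite[Lemma 3.2]{OS18} (or at least by a configuration-dependent distribution whose per-plane means you can actually bound by $m^{1-\frac{(n-k)(2n-1)}{2n^2}}$), carrying out the count that you currently defer.
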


\begin{definition} \label{defn:funcion Phi}
Let $\Gamma_0$ denote the family of all functions  $\rho: Q^*_0  \rightarrow [1,2]\cap \delta\Z$.
 Fix also  $\rho\in\Gamma_0$.  For simplicity, let us write S$_{k,i}= S_k(x_i,r_i)$, where $r_i=\rho(x_i)$ and $1 \leq i \leq u$.

For this family of $k$-skeletons, we define the function $\Phi_{\rho}$:
\begin{equation}\label{eq: family k faces}
\Phi_{\rho} (S_{k}(x_i,r_i)) = \ell^i_k,
\end{equation}
where $\ell_k^i$ denotes the face of $S_{k,i}$ chosen as in Lemma \ref{chooseelement}.

\end{definition}

 The $k$-skeleton maximal operator defined in \eqref{k-skeleton maximal}, unlike most other kinds maximal operators, is not sub-linear. To deal with this inconvenient we introduce a discretized and linearized version of the problem. 

\begin{definition} \label{linearmaximal}  Given a function $\rho\in\Gamma_0$ and $0<\delta <1$, if $f \in L^{1}_{\rm loc}(\rr^n)$ we define the  \textit{$(\rho,k)$-skeleton maximal function} with width $\delta$,
$
{\widetilde{M}}^{k}_{\rho,\delta}f : Q_0 \rightarrow \rr
$ by
\begin{center}
\begin{equation}\label{eq: linear maximal}
{\widetilde{M}}^{k}_{\rho,\delta}f(x)= \displaystyle\frac{1}{|\ell_{x,\delta}|} \int _ {\ell_{x,\delta}} |f(y)| \,dy,
\end{equation}
\end{center}
where $\ell_{x,\delta}$ is a $\delta$-neighborhood of $\ell_{x}:=\Phi_{\rho} (S_{k}(x^*, \rho(x^*)))$.
\end{definition}

\begin{remark}\label{rmk: maximal lineal}
By definition, $\widetilde{M}^k_{\rho,\delta}$ is constant over each set $Q_0(i)$, $i=1,\ldots,u$, and it is completely determined by its values over the set $Q_0^* = \{x_1,\ldots, x_u\}$.

 Let $CQ_0$ denote
the $n$-cube with the same center as $Q_0$ and side length $C$.
Since $r$ is bounded by 2, in the previous definition it is enough to consider functions $f$ supported on $7Q_0$, since for each $x \in Q_0$, we have that  $\ell_{x,\delta} \subset 7Q_0$.
\end{remark}

\medskip

The following result establishes the normwise relation between the $k$-skeleton maximal operator and its linearized version.
\begin{lemma}
 \cite[Lemma 2.6]{OS18} \label{sup} There exists a constant $C_{k,n}>0$ such that  if $0 < \delta <1$,
\[
 \norm{M^k_{\delta}}_{L^{p}\rightarrow L^{q}(Q_0)}\leq C_{k,n} \sup_{\rho\in\Gamma_0} \norm{\widetilde{M}^k_{\rho,3\delta}}_{L^{p}\rightarrow L^{q}(Q_0)}.
\]
\end{lemma}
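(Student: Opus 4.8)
The plan is to prove a pointwise domination
\[
M^k_\delta f(x)\ \le\ C_{k,n}\,\widetilde{M}^k_{\rho^*,3\delta}f(x)\qquad\text{for every }x\in Q_0,
\]
for a single $\rho^*=\rho^*(f)\in\Gamma_0$ depending on $f$, and then take $L^q(Q_0)$ norms, divide by $\norm{f}_{L^p}$, and pass to the supremum over $f\neq 0$: since the right‑hand side of the resulting inequality $\norm{M^k_\delta f}_{L^q(Q_0)}/\norm{f}_{L^p}\le C_{k,n}\sup_{\rho\in\Gamma_0}\norm{\widetilde{M}^k_{\rho,3\delta}}_{L^p\to L^q(Q_0)}$ no longer involves $f$, this gives the claim. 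We may assume $f\ge 0$ and $1/\delta\in\nn$ (so that the grid construction of this section applies), and we recall that $\Gamma_0$ is a finite set, so choosing $\rho^*$ below and taking the supremum over $\rho$ cause no difficulty. The one genuine idea is that $\rho^*(x_i)$ should be chosen to be optimal not for $M^k_\delta f$ on the cube $Q_0(i)$ — whose optimal radius varies with the point $x\in Q_0(i)$ — but for the \emph{discretized, linearized} quantity attached to the center $x_i$; the supremum over the continuum of radii in \eqref{k-skeleton maximal} will then be absorbed for free.

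First I would record the geometric estimate. Fix a cube $Q_0(i)$ with center $x_i$, a point $x\in Q_0(i)$ (so $|x-x_i|_\infty\le\delta/2$), a radius $r\in[1,2]$, and a grid point $r'\in[1,2]\cap\delta\Z$ with $|r-r'|\le\delta$. For every face index $j$, the $\delta$-neighborhood $S^j_{k,\delta}(x,r)$ of the $j$-th face of $S_k(x,r)$ is contained in the $3\delta$-neighborhood $\ell^j_{3\delta}(x_i,r')$ of the $j$-th face of $S_k(x_i,r')$: moving the center by $\le\delta/2$ and the radius by $\le\delta$ displaces the corresponding axis‑parallel $k$-face by at most $\tfrac{3}{2}\delta$, which is absorbed by the factor $3$. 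Both sets are $\delta$- (respectively $3\delta$-) tubes around $k$-dimensional faces of side length comparable to $1$, so each has Lebesgue measure comparable to $\delta^{n-k}$ with constants depending only on $n,k$; hence, using $f\ge0$,
\[
\fint_{S^j_{k,\delta}(x,r)}f\ \le\ \frac{|\ell^j_{3\delta}(x_i,r')|}{|S^j_{k,\delta}(x,r)|}\fint_{\ell^j_{3\delta}(x_i,r')}f\ \le\ C_{k,n}\fint_{\ell^j_{3\delta}(x_i,r')}f ,
\]
and taking the minimum over $j$, $\ \min_{j}\fint_{S^j_{k,\delta}(x,r)}f\le C_{k,n}\min_{j}\fint_{\ell^j_{3\delta}(x_i,r')}f$.

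Now I would fix $\rho^*$: for each $i$, let $\rho^*(x_i)\in[1,2]\cap\delta\Z$ realize $\max_{r''\in[1,2]\cap\delta\Z}\min_{j}\fint_{\ell^j_{3\delta}(x_i,r'')}f$, which defines $\rho^*\in\Gamma_0$. Then for every $x\in Q_0(i)$ and $r\in[1,2]$, with $r'$ as above, combining the previous display, the fact that $r'$ is one of the competitors in the maximum defining $\rho^*(x_i)$, and that a minimum over all faces is at most the average over the single face $\ell_x=\Phi_{\rho^*}(S_k(x_i,\rho^*(x_i)))$ selected in Definition \ref{defn:funcion Phi}, gives
\[
\min_{j}\fint_{S^j_{k,\delta}(x,r)}f\ \le\ C_{k,n}\min_{j}\fint_{\ell^j_{3\delta}(x_i,\rho^*(x_i))}f\ \le\ C_{k,n}\fint_{\ell_{x,3\delta}}f\ =\ C_{k,n}\,\widetilde{M}^k_{\rho^*,3\delta}f(x).
\]
The right‑hand side is independent of $r$, so taking the supremum over $r\in[1,2]$ on the left yields $M^k_\delta f(x)\le C_{k,n}\widetilde{M}^k_{\rho^*,3\delta}f(x)$ for all $x\in Q_0$, and then $\norm{M^k_\delta f}_{L^q(Q_0)}\le C_{k,n}\norm{\widetilde{M}^k_{\rho^*,3\delta}f}_{L^q(Q_0)}\le C_{k,n}\big(\sup_{\rho\in\Gamma_0}\norm{\widetilde{M}^k_{\rho,3\delta}}_{L^p\to L^q(Q_0)}\big)\norm{f}_{L^p}$, as needed. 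I expect the main obstacle to be the geometric containment $S^j_{k,\delta}(x,r)\subset\ell^j_{3\delta}(x_i,r')$ with precisely the constant $3$, together with the two‑sided comparison of both tube measures to $\delta^{n-k}$; by contrast, the combinatorial content of Lemma \ref{chooseelement} plays no role here — all that is used is that $\Phi_{\rho}$ picks out exactly one face of each skeleton.
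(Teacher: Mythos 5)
Your proof is correct, and it reconstructs essentially the argument behind the cited result (\cite[Lemma 2.6]{OS18}), which this paper quotes without reproducing: round $r$ to the $\delta$-grid, move the center to $x^*=x_i$, absorb the resulting $\le \tfrac52\delta$ displacement into the $3\delta$-neighborhood, compare the tube measures (both $\approx_{n,k}\delta^{n-k}$), and choose $\rho^*(x_i)$ as the discrete maximizer so that the continuum supremum over $r$ and the minimum over faces are both dominated by the single face selected by $\Phi_{\rho^*}$. Your observation that the combinatorial content of Lemma~\ref{chooseelement} is irrelevant here --- only the fact that $\Phi_\rho$ selects some one face per skeleton is used --- is accurate.
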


In consequence, by obtaining $L^p\rightarrow L^q$ estimates on the discrete maximal operator uniformly on $\rho$, we will also obtain $L^p\rightarrow L^q$ bounds for $M^k_\delta$, at least at a local level.

\section{ the weak endpoint estimate}
In the present section we
shall prove a weak-type $(1,q^*)$ estimate, 
for the $(\rho,k)$-skeleton maximal function, the linearized version of $M^k_\delta$.
To achieve this, we will follow some ideas used in \cite{Sch96} to treat the circular maximal operator.

 Although we are interested on $k$-skeletons, first we establish the setting for more general sets.

\medskip 

\subsection{General setting} Let $\Omega \subseteq \rr^n$ be a Borel set and $I$ an index set. For each $x \in \Omega$, consider a family $\mathcal{A}_{x} = \{A_{x,r}\}_{r \in I}$  of sets with positive and finite Lebesgue measure in $\rr^n$ such that $\displaystyle\inf_{r \in I} |A_{x,r}|  \geq \delta^{\tau}$ for some $\tau \in \mathbb{R}$ and $0<\delta<1$. Intuitively, we can think that $\mathcal{A}_{x}$ is a family of sets at $\delta$-scale with measure equal to $\delta^{\tau}$, for all $x \in \Omega$.
 
In addition, suppose there exists a constant $C>0$, not depending on $x$, such that
 \begin{equation}\label{geom-cond-A}
  {\rm diam}\left(\{x\} \cup \bigcup_{r \in I} A_{x,r}\right) \leq C
 \end{equation}
 
and for all $f \in L^1_{\rm loc}(\rr^n)$ consider the maximal operator $T f : \Omega \rightarrow \rr $ defined by
 \[
T f(x) = \sup_{r \in I }\frac{1}{|A_{x,r}|}\int_{A_{x,r}} |f(y)|  \, dy.
\]

\medskip

Let $E \subset \rr^n$ be a set with finite Lebesgue measure, $0 < \lambda \leq 1$ 
, and $\{x_j\}_{j=1}^m$, a maximal $\delta$-separated sequence in 
\begin{equation}\label{eq:set F}
F= \{ x \in \Omega : T\1_{E} (x) >\lambda \}.
\end{equation}
Note that, by \eqref{geom-cond-A}, $F$ is a bounded set. Pick $r_j \in I$ such that
\[
|A_{x_j,r_j} \cap E| > \lambda  |A_{x_j,r_j}|, \quad  \quad 1 \leq j \leq m.
\]

To simplify the notation, in the next write $A_j$ instead of $A_{x_j, r_j}$ and $A^*_j$ instead of $A_{x_j,r_j} \cap E$. Consider the multiplicity function 
\[
\Upsilon= \displaystyle \sum_{j=1}^m \1_{A^*_j}
\]
and define $\mu$ to be the smallest integer for which there exist at least $m/2$ values of $j$ such that 
\[
|\{ x \in A^*_ j : \Upsilon (x) \leq \mu \}| \geq \frac{\lambda}{2} |A_j|.
\]

Observe that
\begin{equation} \label{measureE}
\mu |E| \geq \int_{\{x \in E: \Upsilon(x)\leq \mu\}} \Upsilon \, dx= \displaystyle \sum _{j=1}^m| \{  x \in A^*_j : \Upsilon (x)\leq \mu\}| \geq \frac{\lambda}{2} m \delta^{\tau}.
\end{equation}
 
The following lemma characterizes the estimates on $\mu$ required to obtain restricted weak-type $(p,q)$ estimates for the maximal operator $T$.

Recall that $T$ is said to be 
of weak-type $(p,q)$ with norm $K$ and write
\[
\norm{Tf}_{q,\infty} \leq K \norm{f}_{p},
\]
if for all $f \in L^p(\rr^n)$ and $t>0 \in (0,1]$, 
\begin{equation*}\label{eq: weak type defn}
|\{ x \in \Omega : T f(x) > t \}|^{1/q} \leq K t^{-1} \norm{f}_{L^p}.
\end{equation*}
When the above inequality holds for characteristic functions of arbitrary sets in $\rr^n$ of finite measure, we say that $T$ is of restricted weak-type $(p,q)$.

\begin{lemma}\label{combinatorialmethod}
Let $\alpha\geq 0$ and $\beta < 1$.  There exists a positive constant $C=C(n,q)$ such that, if  $\mu \leq H \lambda^{-\alpha}m^{\beta}$ for every choice of a bounded set $E \subset \mathbb{R}^n$, $0 < \lambda \leq 1$ and $\delta > 0$, then $T$ is of restricted weak-type $(p,q)$ with constant $CH^{1/p} \delta^{-\gamma}$,
 where $p=\alpha+1, q=p(1-\beta)^{-1}$ and $\gamma= \frac{\tau}{p}-\frac{n}{q}$. 
\end{lemma}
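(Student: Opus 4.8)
The plan is to exploit the estimate $\mu \le H\lambda^{-\alpha}m^\beta$ inside the counting inequality \eqref{measureE} to bound $m$, the size of a maximal $\delta$-separated subset of the superlevel set $F$, and then convert that bound on $m$ into a bound on $|F|$ using that the points $x_j$ are $\delta$-separated and $F$ is contained in a bounded set. First I would substitute the hypothesis into \eqref{measureE}: since $\mu|E| \ge \frac{\lambda}{2}m\delta^\tau$ and $\mu \le H\lambda^{-\alpha}m^\beta$, we get $H\lambda^{-\alpha}m^\beta |E| \ge \frac{\lambda}{2}m\delta^\tau$, hence $m^{1-\beta} \le 2H\lambda^{-\alpha-1}\delta^{-\tau}|E|$, i.e. $m \le \left(2H\lambda^{-(\alpha+1)}\delta^{-\tau}|E|\right)^{1/(1-\beta)}$. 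This is the arithmetic heart of the argument, and it is exactly where $p = \alpha+1$ and $q = p(1-\beta)^{-1}$ enter: writing $f = \1_E$ so that $\norm{f}_{L^p}^p = |E|$, the exponent $1/(1-\beta)$ on the right distributes as $q/p$ on $|E|$ and as $q$ on $\lambda^{-1}$.

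Next I would pass from $m$ to $|F|$. Because $\{x_j\}_{j=1}^m$ is a \emph{maximal} $\delta$-separated sequence in $F$, the balls $B(x_j,\delta)$ cover $F$, so $|F| \le \sum_{j=1}^m |B(x_j,\delta)| = m\, c_n \delta^n$ for a dimensional constant $c_n$. (One only needs $F$ measurable and the covering property, both of which hold; boundedness of $F$ from \eqref{geom-cond-A} guarantees $m < \infty$.) Combining with the bound on $m$ yields
\[
|F| \;\le\; c_n \delta^n \left(2H\right)^{1/(1-\beta)} \lambda^{-(\alpha+1)/(1-\beta)} \delta^{-\tau/(1-\beta)} |E|^{1/(1-\beta)}.
\]
Raising to the power $1/q = (1-\beta)/p$ and recalling $F = \{x\in\Omega: T\1_E(x) > \lambda\}$, $|E|^{1/q}= \norm{\1_E}_{L^p}^{p/q}=\norm{\1_E}_{L^p}$ when we also track that $|E|^{1/(1-\beta)\cdot(1-\beta)/p} = |E|^{1/p}$ — so in fact $|F|^{1/q} \lesssim H^{1/p}\,\lambda^{-1}\,\delta^{(n - \tau/(1-\beta))/q}\,\norm{\1_E}_{L^p}$. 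The exponent of $\delta$ is $\frac{n}{q} - \frac{\tau}{q(1-\beta)} = \frac{n}{q} - \frac{\tau}{p} = -\gamma$, matching the claimed $\gamma = \frac{\tau}{p} - \frac{n}{q}$. Since this holds for every set $E$ of finite measure and every $\lambda\in(0,1]$, this is precisely the restricted weak-type $(p,q)$ statement with constant $CH^{1/p}\delta^{-\gamma}$.

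The only genuinely delicate point is bookkeeping: one must be careful that the definition of $\mu$ (the smallest integer such that at least $m/2$ indices $j$ satisfy $|\{x\in A_j^*:\Upsilon(x)\le\mu\}|\ge \frac{\lambda}{2}|A_j|$) is compatible with inequality \eqref{measureE} — the factor $m/2$ there is absorbed into constants, and the lower bound $|A_j|\ge\delta^\tau$ is what produces the $\delta^\tau$ on the right of \eqref{measureE}. I would also note that the restriction $0<\lambda\le 1$ is only used to ensure $\lambda^{-1}$ dominates, and that $\alpha\ge 0$, $\beta<1$ guarantee $p\ge 1$ and $q$ finite and $\ge p$, so the resulting estimate is meaningful. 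No step requires more than elementary manipulation once \eqref{measureE} and the covering bound are in hand; the substantive content was already isolated in \eqref{measureE}, and this lemma is the mechanism that turns a pointwise-combinatorial bound on $\mu$ into a functional-analytic bound on $T$.
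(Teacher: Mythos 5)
Your proposal is correct and follows essentially the same route as the paper: combine the inequality \eqref{measureE} with the hypothesis $\mu\le H\lambda^{-\alpha}m^{\beta}$ to control $m^{1-\beta}$ by $|E|$, use the maximal $\delta$-separation to bound $|F|\le c_n\delta^n m$, and then check that the exponents collapse to $p=\alpha+1$, $q=p(1-\beta)^{-1}$, $\gamma=\tfrac{\tau}{p}-\tfrac{n}{q}$ (the paper runs the same computation in the reverse direction, verifying that $H^{1/p}\delta^{-\gamma}\lambda^{-1}|E|^{1/p}\ge\tfrac12(m\delta^n)^{1/q}$). The only blemish is the momentary miswriting $|E|^{1/q}=\norm{\1_E}_{L^p}$ in your third paragraph, which you immediately self-correct to the right exponent $|E|^{1/p}$, so the argument stands.
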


\begin{proof}
We need to prove that 
\begin{equation}\label{weakbound}
|\{ x \in \Omega :  T \1_E(x) > \lambda \}|^{1/q} \leq C H^{1/p} \delta^{-\gamma}\lambda^{-1} |E|^{1/p},
\end{equation}
for every set $E \subset \mathbb{R}^n$ of finite measure.

Let $E\subset \mathbb{R}^n$  and $\{x_j\}_{j=1}^{m}$ be a maximal $\delta$-separated sequence in $F$, as in \eqref{eq:set F}. Then,  
\[
|\{ x \in \Omega:  T \1_{E}(x) > \lambda\}| \leq c_n \delta^n m ,
\]
where $c_n$ denotes the measure of the $n$-dimensional ball with radius 1.
In view of (\ref{measureE}), i.e. $|E|\geq \mu^{-1}\frac{\lambda}{2} m \delta^{\tau}$, and by the assumption on $\mu$ we conclude that 
\begin{align*}
 H^{1/p} \delta^{-\gamma}\lambda^{-1} |E|^{1/p} &\geq  H^{1/p} \delta^{-\gamma} \lambda^{-1}(H^{-1}(\lambda/2)^{1+\alpha} m^{1-\beta} \delta^{\tau})^{1/p} \\ &=\frac{1}{2} \delta^{-\gamma + \tau/p} m^{\frac{1-\beta}{p}}  
  \\ &= \frac{1}{2}(m \delta^n)^{1/q}.
\end{align*}
 Therefore, \eqref{weakbound} follows with constant $C=2c_n^{\frac{1}{q}}$. 
\end{proof}

\subsection{The case of $k$-skeletons}

First, we introduce some notation and definitions.
\begin{itemize}
\item Given $n\geq 2$, $0\leq k< n \in \mathbb{N}$, and $\{e_1,\ldots, e_n\}$, the canonical base in $\mathbb{R}^n$, we denote by $\pi_1, \ldots, \pi_{n \choose k}$ the ${n \choose k}$ subspaces of dimension $k$ generated by vectors in the canonical base. From now on, we denote them as $k$-planes. 
  For example, if $n=2$ and $k=1$, $\pi_1$ and $\pi_2$ are the usual coordinates axes in the plane. If $k=0$, the $0$-plane will be the origin. 

\item Let $\rho \in \Gamma_0$ and $\ell^1_k, \ldots, \ell^u_k$ as in (\ref{eq: family k faces}). We say that $\ell^i_k$, $1 \leq i\leq u$, is parallel to $\pi_\omega$ if there exists a point $v \in \rr^n$ such that it is contained in the affine subspace  $V = \pi_\omega + v$.

\item 
For each $\omega=1,\ldots, {n \choose k}$, we define the sets
\[
E_{\pi_w} := \{x_i \in Q_0^* : \ell^k_i \, \text{ is parallel to} \, \pi_w\}.
\]
\end{itemize}
 
\medskip 
 
  Let $E \subset \rr^n$ be a set with finite measure, $0< \lambda\leq 1$ and consider the set
\begin{eqnarray*}
F_{\omega}:= \{ x \in \psi^{-1}(E_{\pi_\omega}) : \widetilde{M}^k_{\rho,\delta} {\1}_{E}(x) >\lambda\},
\end{eqnarray*}
where the index $\omega$ is fixed.
By Remark \ref{rmk: maximal lineal}, $F_\omega$ is the union of those half-open cubes $Q_0(i)$ with center $x_i \in F_\omega$.
Note that it is enough just to consider those sets $E$ such that  $E\cap 7Q_0 \neq \emptyset$, otherwise $F_\omega = \emptyset$.

The set $Q^*_0 \cap F_\omega := \{x_j\}_{j=1}^{m_\omega}$ is a maximal $\delta$-separated set in $F_\omega$ and for each $x_j$, we have
\begin{equation*}\label{eq:family delta separated}
|E \cap \ell^j_{k,\delta}| >\lambda |\ell^{j}_{k,\delta}|.
\end{equation*}
For simplicity, we write $(\ell^j_\delta)^*$ instead of $E \cap \ell^j_{k,\delta}$ and $\ell^j_\delta$ instead of $\ell^j_{k,\delta}$.

\medskip

We define the multiplicity function associated to $F_\omega$ by 
\[
\Upsilon_{\omega} = \displaystyle\sum_{j=1}^{m_\omega} \1_{(\ell^j_\delta)^*} 
\]
and $\mu$, as before, the smallest integer such that there exists at least $m_{w}/2$ values of $j$ such that 
\[
|\{x \in  ({\ell}^j_{\delta})^* : \Upsilon_{\omega}(x) \leq \mu \}| \ge \frac{\lambda}{2} |\ell^j_\delta|.
\]

 Fix $1\leq j \leq m_\omega$ and consider $x \in \ell_\delta^j$. Since $ (\ell^j_\delta)^* \subseteq \ell^j_\delta$ we have that 
\[
\Upsilon_{\omega} \leq \displaystyle\sum_{j=1}^{m_\omega} \1_{\ell^j_\delta} .
\]
The faces $\{\ell_k^1, \ldots, \ell_k^{m_\omega}\}$ were chosen from a family of $k$-skeletons using Lemma \ref{chooseelement} and each one of them belongs to an affine $k$-plane parallel to $\pi_\omega$.
Therefore, by means of the mentioned Lemma, we obtain an estimate 
for the number of faces containing $x$. More precisely, for all $x \in \ell_\delta^j$ we have,
\[
\Upsilon _{\omega}(x) \leq C_{n,k} m_{\omega}^{1-\frac{(n-k)(2n-1)}{2n^2}}.
\]
Since this holds for every $j =1,\ldots, m_\omega$, by the definition of $\mu$, we obtain 
\begin{equation}\label{eq: cota mu}
\mu \leq C_{n,k} m_{\omega}^{1-\frac{(n-k)(2n-1)}{2n^2}}.
\end{equation}

\medskip 

The following lemma provides us the weak-type $(1,q^*)$ estimate mentioned at the beginning of the section. 
 
\begin{lemma}\label{weak bound} 
Given $1<q< \infty$, $\rho \in \Gamma_0$, $0<\delta <1$ and $0 \leq k< n$, there exists a positive constant $C=C(k,n,q)$ such that
\begin{equation*}
\begin{split}
\norm{\widetilde{M}^k_{\rho,\delta} f}_{q,\infty} &\leq C\, \delta^{\frac{k-n}{2np}}\norm{f}_1 \hspace{1.4cm} \textup{if}  \hspace{0.3cm} 1< q \leq q^*\\
\vspace{0.5cm}
\norm{\widetilde{M}^k_{\rho,\delta}f}_{q,\infty} &\leq C\,\delta^{\frac{n}{q}+ k-n}\norm{f}_1 \hspace{ 1 cm} \textup{if}  \hspace{0.3cm} q^*<q < \infty,
\end{split}
\end{equation*}
for every $f \in L^1(7Q_0)$ and $q^*=\frac{2n^2}{(n-k)(2n-1)}$.
\end{lemma}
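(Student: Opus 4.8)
The plan is to feed the multiplicity estimate \eqref{eq: cota mu} into the abstract Lemma \ref{combinatorialmethod} to obtain a \emph{restricted} weak-type bound, and then promote it to the genuine weak-type bound in the statement by a layer-cake argument.

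First I would fix $\omega \in \{1,\dots,\binom{n}{k}\}$ and apply Lemma \ref{combinatorialmethod} with $\Omega = \psi^{-1}(E_{\pi_\omega})$, $A_{x,r} = \ell_{x,\delta}$ and $\tau = n-k$: condition \eqref{geom-cond-A} holds since every $\ell_{x,\delta}$ lies in $7Q_0$ (Remark \ref{rmk: maximal lineal}), and $|\ell_{x,\delta}| \approx \delta^{n-k}$, the comparability constant being harmless. Estimate \eqref{eq: cota mu}, namely $\mu \le C_{n,k}\, m_\omega^{\beta}$ with $\beta = 1 - \tfrac{(n-k)(2n-1)}{2n^2}$, is precisely the hypothesis of that lemma with $\alpha = 0$ and $H = C_{n,k}$, so it yields, for $\widetilde M^k_{\rho,\delta}$ restricted to $\psi^{-1}(E_{\pi_\omega})$, a restricted weak-type $(1,q^*)$ bound with $q^* = (1-\beta)^{-1} = \tfrac{2n^2}{(n-k)(2n-1)}$ and constant a dimensional multiple of $\delta^{-\gamma}$, where $\gamma = \tau - \tfrac{n}{q^*} = (n-k)\big(1-\tfrac{2n-1}{2n}\big) = \tfrac{n-k}{2n}$. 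Since the sets $\psi^{-1}(E_{\pi_\omega})$, $\omega = 1,\dots,\binom{n}{k}$, cover $Q_0$, summing over $\omega$ gives a restricted weak-type $(1,q^*)$ bound for $\widetilde M^k_{\rho,\delta}$ on $Q_0$ with constant $\approx \delta^{(k-n)/2n}$. For $q > q^*$ I would observe that each $m_\omega \ge 1$, so \eqref{eq: cota mu} also gives $\mu \le C_{n,k}\, m_\omega^{\beta'}$ for every $\beta' \in [\beta,1)$; taking $\beta' = 1 - 1/q$ and re-running Lemma \ref{combinatorialmethod} produces a restricted weak-type $(1,q)$ bound with constant $\approx \delta^{-\gamma'}$, $\gamma' = (n-k) - \tfrac{n}{q}$, that is, $\approx \delta^{n/q+k-n}$.

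The remaining and main step is to upgrade these restricted weak-type bounds to the weak-type bounds claimed. Here I would use that $\widetilde M^k_{\rho,\delta}$ is a positive operator, linear on nonnegative functions (it is integration against the positive kernel $|\ell_{x,\delta}|^{-1}\1_{\ell_{x,\delta}}(y)$): by the layer-cake formula and Tonelli, $\widetilde M^k_{\rho,\delta}f(x) = \int_0^\infty \widetilde M^k_{\rho,\delta}\1_{\{f>s\}}(x)\,ds$ pointwise, for $f \ge 0$. Since $q^* > 1$, the Lorentz space $L^{q^*,\infty}(Q_0)$ is normable, with norm equivalent to the weak quasinorm up to a factor depending only on $q^*$; Minkowski's integral inequality for that norm, followed by the restricted weak-type bound applied to each bounded set $\{f>s\}$, gives
\[
\norm{\widetilde M^k_{\rho,\delta}f}_{q^*,\infty} \;\lesssim\; \delta^{(k-n)/2n}\int_0^\infty \big|\{f>s\}\big|\,ds \;=\; \delta^{(k-n)/2n}\,\norm{f}_1 ,
\]
with the implicit constant depending on $n,k,q^*$, and the analogue with $\delta^{n/q+k-n}$ when $q > q^*$. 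Applying this to $|f|$ covers general $f \in L^1(7Q_0)$, and the case $1 < q < q^*$ follows from the $q = q^*$ case because $|Q_0| = 1$ and $\widetilde M^k_{\rho,\delta}f$ is supported on $Q_0$, whence $\norm{\widetilde M^k_{\rho,\delta}f}_{q,\infty} \le |Q_0|^{1/q-1/q^*}\,\norm{\widetilde M^k_{\rho,\delta}f}_{q^*,\infty}$. I expect the honest passage from restricted to genuine weak type — justifying the layer-cake identity for this operator and working in the normable space $L^{q^*,\infty}$ — to be the only delicate point, the combinatorial content being already packaged in \eqref{eq: cota mu} together with Lemma \ref{combinatorialmethod}.
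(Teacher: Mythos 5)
Your proposal is correct and follows essentially the same route as the paper: the same application of Lemma \ref{combinatorialmethod} with $\alpha=0$, $\tau=n-k$ and the multiplicity bound \eqref{eq: cota mu}, the same summation over the partition $\psi^{-1}(E_{\pi_\omega})$, the same trivial inflation of $\beta$ for $q>q^*$, and the same use of $|Q_0|=1$ for $1<q<q^*$. The only difference is in the final upgrade from restricted to genuine weak type: the paper cites \cite[Theorem 5.5.3]{BS88} as a black box, whereas you prove it directly via positivity and linearity of $\widetilde M^k_{\rho,\delta}$ on nonnegative functions, the layer-cake identity, and the normability of $L^{q,\infty}$ for $q>1$ — a valid, self-contained substitute.
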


\begin{proof}
Consider the restricted maximal operator  $\widetilde{M}^k_{\rho,\delta} : \psi^{-1}(E_{\pi_\omega}) \rightarrow \rr$. By \eqref{eq: cota mu} we have \[
 \mu \leq C_{n,k} m_{\omega}^{1-\frac{(n-k)(2n-1)}{2n^2}}.
 \]
  Applying Lemma \ref{combinatorialmethod} with $\alpha=0$, $\beta = 1- \frac{(n-k)(2n-1)}{2n^2}$, $H = C_{n,k}$ and $\tau= n-k$, we obtain 
\begin{equation} \label{weak-type-w}
|\{ x \in \psi^{-1}(E_{\pi_w}) : \widetilde{M}^k_{\rho,\delta}\1_E (x) > \lambda \} |^{1/q^*} \leq C_{k,n} \delta^{\frac{k-n}{2n}} \lambda^{-1} |E|,
\end{equation}
for every set $E \subset \rr^n$ of finite measure. 

Since this holds for every $\omega=1,\ldots, {n\choose k}$, and the sets $\psi^{-1}(E_{\pi_\omega})$ form a Borel partition of $Q_0$, we have
\begin{align*}
| \{ x \in Q_0 : \widetilde{M}^k_{\rho,\delta} \1_{E}(x) > \lambda \} |&\leq \sum_{\omega=1}^{{n \choose k}} \lvert \{  x \in \psi^{-1}(E_{\pi_\omega}): \widetilde{M}^k_{\rho,\delta} \1_{E}(x) > \lambda\} \rvert\\ &\leq {n \choose k} (C_{n,k}\delta^{\frac{k-n}{2n}} \lambda^{-1}|E|)^{q^*}
\end{align*}
and therefore, we can conclude that $\widetilde{M}^k_{\rho,\delta}$ is of restricted weak type $(1,q^*)$. 

If $1<q<q^*$, 
\begin{align*}
 |\{x \in Q_0: \widetilde{M}^k_{\rho,\delta}\1_{E} > \lambda \} |^{1/q} &\leq |\{x \in Q_0: \widetilde{M}^k_{\rho,\delta}\1_{E }> \lambda \} |^{1/q^*}   \\&\leq C_{k,n} \lambda^{-1} \delta^{\frac{k-n}{2n}} |E|,
 \end{align*}
and $\widetilde{M}^k_{\rho,\delta}$ is of restricted weak type $(1,q)$.

For the remaining case $q> q^*$, take a constant $v>0$ such that $\frac{1}{q} = \frac{1}{q^*} - v$. Trivially by \eqref{eq: cota mu},
\[
\mu \leq C_{n,k} m_{\omega}^{1-\frac{(n-k)(2n-1)}{2n^2} +v}.
\]
Invoking Lemma \ref{combinatorialmethod} with $\beta= 1-\frac{(n-k)(2n-1)}{2n^2} +v$, $\alpha=0$ and $K= C_{n,k}$ we obtain,
\[
|\{ x \in \psi^{-1}(E_{\pi_w}) : \widetilde{M}^k_{\rho,\delta}\1_E (x) > \lambda \} |^{1/q} \leq C_{k,n} \delta^{\frac{n}{q}-(n-k)} \lambda^{-1} |E|.
\]
 Therefore,
\[
| \{ x \in Q_0 : \widetilde{M}^k_{\rho,\delta} \1_{E}(x) > \lambda\} |^{1/q} \leq C_{k,n}\delta^{\frac{n}{q}+k-n} \lambda^{-1}|E|,
\]
which implies that $\widetilde{M}^k_{\rho,\delta}$ is of restricted weak type $(1,q)$ with $q>q^*$.

Finally, by \cite[Theorem 5.5.3]{BS88}, we can conclude that $\widetilde{M}^k_{\rho, \delta}$ is of weak-type $(1,q)$.
\end{proof}

\section{Proof of the main theorem}
In order to prove the upper bounds in Theorem \ref{mainresult}, we first establish the following result.
\begin{proposition} \label{p,q linear estimates} For every $ 1 < p \leq q $, $0< \delta<1$ and $\rho \in \Gamma_0$, there exist positive constants $C$ and $C'$  depending on $k,n,p,q$ such that, 
\begin{equation*}
\begin{split}
\norm{\widetilde{M}^k_{\rho,\delta}}_{L^p \rightarrow L^q(Q_0)} &\leq C\, \delta^{\frac{k-n}{2np}}  \hspace{1cm} \textup{if}\, \, q\leq q^*p \\
\norm{\widetilde{M}^k_{\rho,\delta}}_{L^p \rightarrow L^q(Q_0)} &\leq C' \,\delta^{\frac{n}{q}+\frac{k-n}{p}} \hspace{0.5cm} \textup{if} \,\, q > q^*p.
\end{split}
\end{equation*}

\end{proposition}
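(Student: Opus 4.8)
The plan is to interpolate the weak-type $(1,q^\ast)$ endpoint estimate from Lemma \ref{weak bound} against the trivial strong bounds available for $\widetilde{M}^k_{\rho,\delta}$, using the Marcinkiewicz interpolation theorem. First I would record the two easy endpoints. Since $\widetilde{M}^k_{\rho,\delta}$ is an averaging operator over a set of measure $\approx \delta^{n-k}$, it is bounded from $L^\infty$ to $L^\infty$ with constant $1$ (independent of $\delta$), and it is bounded from $L^1$ to $L^\infty$ with constant $\approx \delta^{k-n}$, since $\frac{1}{|\ell_{x,\delta}|}\int_{\ell_{x,\delta}}|f|\le \delta^{k-n}\|f\|_1$ up to a dimensional constant. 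The third, nontrivial endpoint is Lemma \ref{weak bound}: $\widetilde{M}^k_{\rho,\delta}$ is weak-type $(1,q)$ with constant $C\delta^{(k-n)/(2n)}$ for $1<q\le q^\ast$, and with constant $C\delta^{n/q+k-n}$ for $q>q^\ast$; this was already upgraded from restricted weak-type to weak-type via \cite[Theorem 5.5.3]{BS88} inside that lemma.

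Next I would treat the range $q\le q^\ast p$. Here the target exponent pair $(1/p,1/q)$ lies on the segment joining $(1,1/q^\ast)$ (the endpoint from Lemma \ref{weak bound}, used as a weak-type $(1,q^\ast)$ bound with constant $\approx\delta^{(k-n)/(2n)}$) and $(0,0)$ (the $L^\infty\to L^\infty$ bound with constant $1$): indeed $q\le q^\ast p$ is exactly the condition that $1/q \ge (1/q^\ast)(1/p)$, i.e. that $(1/p,1/q)$ sits on or below that line, and since we also need $p\le q$ the point lies in the admissible triangle. Choosing the interpolation parameter $\theta$ with $1/p = \theta\cdot 1 + (1-\theta)\cdot 0 = \theta$, Marcinkiewicz gives $\|\widetilde{M}^k_{\rho,\delta}\|_{L^p\to L^{q_\theta}}\le C (\delta^{(k-n)/(2n)})^{\theta}\cdot 1^{1-\theta} = C\delta^{(k-n)/(2np)}$ where $1/q_\theta = \theta/q^\ast$. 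Then for $q\le q_\theta = q^\ast p$ one drops from $L^{q_\theta}$ to $L^q$ on the bounded domain $Q_0$ (or, more cleanly, interpolates instead against the $L^1\to L^\infty$ and $L^\infty\to L^\infty$ bounds to hit the interior point directly) to obtain the stated exponent $\delta^{(k-n)/(2np)}$, which is independent of $q$ — consistent with the first line of the Proposition.

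Finally, for $q>q^\ast p$ I would interpolate the weak-type $(1,\tilde q)$ bound of Lemma \ref{weak bound} with constant $\approx \delta^{n/\tilde q + k - n}$ (valid for $\tilde q>q^\ast$) against the $L^\infty\to L^\infty$ bound. Writing $1/p=\theta$ and $1/q = \theta/\tilde q$ so that $\tilde q = q/p > q^\ast$ (which is precisely the hypothesis $q>q^\ast p$), Marcinkiewicz yields
\[
\|\widetilde{M}^k_{\rho,\delta}\|_{L^p\to L^q(Q_0)} \le C\big(\delta^{n/\tilde q + k - n}\big)^{\theta} = C\,\delta^{(n/\tilde q + k - n)/p} = C\,\delta^{\frac{n}{q}+\frac{k-n}{p}},
\]
which is the second line. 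The main technical point to watch is bookkeeping the admissibility of the Marcinkiewicz exponents (the pairs must be ordered correctly, $p_0<p_1$ and the corresponding $q_i$, and the interpolation must land at a point with $p\le q$) and checking that the constants produced depend only on $k,n,p,q$ and not on $\rho$ or $\delta$ — the latter is automatic since all three endpoint constants are $\rho$-independent and their $\delta$-dependence has been made explicit. A minor subtlety is that the weak-type bounds in Lemma \ref{weak bound} are stated for $f\in L^1(7Q_0)$ and with output on $Q_0$; by Remark \ref{rmk: maximal lineal} this localization is harmless, so one restricts attention to $f$ supported on $7Q_0$ throughout and the interpolation is carried out on that fixed bounded configuration.
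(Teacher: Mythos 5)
Your proposal is correct and follows essentially the same route as the paper: the paper's proof is exactly the interpolation of the weak-type $(1,q^*)$ (resp.\ $(1,q/p)$) endpoint of Lemma \ref{weak bound} with the trivial $L^\infty\to L^\infty$ bound via Marcinkiewicz, and your exponent bookkeeping ($\theta=1/p$, giving $\delta^{(k-n)/(2np)}$ on the segment to $L^{q^*p}$ and $\delta^{n/q+(k-n)/p}$ when $\tilde q=q/p>q^*$) reproduces the stated constants. The paper states this in one line without the computation, so your write-up is simply a more explicit version of the same argument.
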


\begin{proof}  Given $(p,q)$ with $p \leq q$, by Lemma \ref{weak bound}, the trivial bound
\begin{equation*}
\norm{\widetilde{M}^k_{\rho,\delta} f}_{L^{\infty} (Q_0)} \leq \norm{f}_{L^{\infty} (7Q_0)},
\end{equation*}
and the Marcinkiewicz interpolation Theorem (See e.g.\cite[Theorem 4.4.13 ]{BS88}) we have the desire result.
\end{proof}

\medskip

For each $z=(z_1,\ldots,z_n) \in \Z^n$ we denote
\[
Q_z = [z_1,z_1+1)\times\cdots\times [z_n,z_n+1).
\]
By translation invariance, Proposition \ref{p,q linear estimates}  continues to hold if we replace $Q_0$ by $Q_z$. 

\begin{proof}[ Proof of Theorem \ref{mainresult}]: 
 In both cases, the upper bounds follow from Lemma \ref{sup}, Proposition \ref{p,q linear estimates} and the facts that  
 $\rr^n= \cup_{z}Q_z$ and 
$ \norm{\sum_{z \in \Z^n} \1_{7Q_z}}_{\infty}$ is finite.

If  $q> q^*p$, to obtain the lower bound, we consider  $f=\1_{B_{6\delta}}$, where $B$ is the $k$-skeleton of an $n$-cube with side length one and center in some point $x_0 \in \rr^n$. 
It is easy to see that $M^{k}_\delta f(x) \ge 1$  for all $x$  in a $\delta$-neighborhood of $x_0$  and $\norm{M^k_\delta f}_{L^q} \ge \delta^{n/q}$. 
Since $\norm{f}_{L^p} = |B_{6\delta}|^{n-k/p} \approx \delta^{n-k/p}$, we have \[ 
\norm{M^k_\delta}_{L^p \rightarrow L^q} \geq c_{n,k} \delta^{\frac{n}{q} + \frac{k-n}{p}}.
\] 

For the remaining lower bound in the case $q\leq q^*p$, see \cite[Proposition 2.2]{OS18}. 

\end{proof}

\section*{Acknowledgments}
I thank my supervisor Pablo Shmerkin
for his guidance and for his extremely valuable comments and suggestions.


\end{document}